\newtheorem{theorem}{Theorem}
\newtheorem{corollary}{Corollary}
\theoremstyle{definition}
\newtheorem*{remark}{Remark}
\newtheorem{example}{Example}
\begin{document}

\title{Cubic Polynomials, Linear Shifts, and Ramanujan Cubics.}
\author{Gregory Dresden,  Prakriti Panthi, 
Anukriti Shrestha, Jiahao Zhang}

\maketitle

\begin{abstract}
We show that every monic  polynomial of degree three
with complex coefficients and no repeated roots is either a
(vertical and horizontal) translation of $y=x^3$ or 
can be composed with a linear function to obtain
a Ramanujan cubic. As a result, we gain some new insights 
into the roots  of cubic polynomials. 
\end{abstract}

\section{Introduction.}\label{SectionIntroduction}
It's hard to pick out a favorite from Ramanujan's nearly-uncountable collection
of delightful identities, but these two have to be near the top of anyone's list:
\begin{align}
\sqrt[3]{1/9} \ - \ \sqrt[3]{2/9} \ +\  \sqrt[3]{4/9}\  &= \ \sqrt[3]{\sqrt[3]{2} - 1}\label{Ram1}\\[1.2ex]
\sqrt[3]{\cos \frac{2\pi}9} \ + \ \sqrt[3]{\cos \frac{4\pi}9}
\ -\  \sqrt[3]{\cos \frac{\pi}9}
\ &= \ 
\sqrt[3]{\frac{3}{2}\left( \sqrt[3]{9} - 2\right)}\label{Ram2}
\end{align}
Both of these equations appear in 
Ramanujan's notebooks \cite{Ber},
and they have been studied in a number
of papers. Landau \cite{Lan1, Lan2} treated the first equation
as an example of ``nested radicals", while Berndt and Bhargava \cite{BB} gave an proof of the second 
equation using only elementary methods. 
It turns out that both equations are related to the roots
of a special class of degree-3 polynomials. 
In keeping with past work, we will define a {\em Ramanujan simple cubic} (RSC) to be a polynomial 
with (possibly complex) coefficients
of the form
\[
\textstyle p_B(x) = x^3 -\left(\frac{3+B}{2}\right) x^2 - \left(\frac{3-B}{2}\right) x + 1.
\]
We will prove that almost every cubic is just 
a linear shift away from a Ramanujan simple cubic, and this
will allow us to recapture the two formulas above, and also to  come up with some lovely new identities,
such as the deceptively simple formula
\begin{equation}
 2\sqrt{6}\cos \frac{11\pi}{36} \ +\ 6\cos \frac{10\pi}{36} \ = \ 
 \left(3\sqrt{2}+\sqrt{6}\right)\cos \frac{\pi}{36} \label{cos36}
\end{equation}
and the rather surprising fact that 
\begin{equation}
\frac{1}{2}\left(-5+\sqrt{13}+2\sqrt{26-6\sqrt{13}} \cos \frac{\pi}{26}\right) \label{cos26}
\end{equation}
is a solution to $x^3 + x^2 -4x + 1 = 0$

In section \ref{SectionRSC} we discuss the properties of
these RSC polynomials, and in 
section \ref{SectionMainResult} we prove our main result. 
This will lead to many nice examples in section \ref{SectionExamples}.


\section{Ramanujan Simple Cubics.}\label{SectionRSC}

Surprisingly, these RSC's have been studied in one form or another for over a hundred years. In 1911, Dickson \cite{Dic} discussed
integral solutions to $p_B(x) = 0$ modulo a prime.
More recently, a number of authors \cite{BCMA, She, Wit} have 
studied a slightly more general class of polynomials
they call {\em Ramanujan cubics}, which are simply our
RSC polynomials $p_B(x)$ but with $x$ replaced by $x/s$.
Similarly, 
if we replace the $x$ in $p_B(x)$ with $-x$, we get
the 
{\em Shanks polynomials}, so called because 
they generate what 
Shanks called the  ``simplest cubic fields" \cite{Sha}.
Foster's paper \cite{Fos} has an excellent review of 
earlier work on the Shanks polynomials and the simplest 
cubic fields; he also proved that every degree-three cyclic
extension of the rationals is generated by a Shanks polynomial (which implies the same for our RSC); this was done earlier by Kersten and Michali\v{c}ek \cite{KM}.
Also, Lehmer \cite{Leh} and Lazarus \cite{Laz} have shown that the minimal polynomials for so-called 
{\em cubic Gaussian periods}, when composed with 
some $x-a$ for $a$ an integer, will equal one of the 
Shanks polynomials (and thus are related to our RSC's). 

The following theorem illustrates some of  
the remarkable properties of Ramanujan simple
cubics (RSC).

\begin{theorem}\label{theorem1}
For $ p_B(x) = x^3 -\left(\frac{3+B}{2}\right) x^2 - \left(\frac{3-B}{2}\right) x + 1$ the Ramanujan simple cubic defined earlier,
\begin{enumerate} 
\item   The roots $r_1, r_2, r_3$ of $p_B(x)$ are always permuted by the order-three map 
$n(x) = \frac{1}{1-x}$. 
\item The roots $r_1, r_2, r_3$
satisfy
\begin{equation}\label{theorem1cuberoots}
\textstyle \sqrt[3]{r_1} \ + \ \sqrt[3]{r_2} \ +\  \sqrt[3]{r_3} \ = \  \sqrt[3]{\left(\frac{3+B}{2}\right)
\ - \ 6 \ + \  3\sqrt[3]{\frac{27 + B^2}{4}}}
\end{equation}
so long as, for complex arguments, we choose the appropriate values for the 
cube roots. 
\item  If we define the elements of the set $\{s_1, s_2, \dots, s_6\}$ as 
\begin{equation}
\textstyle 
s_k \ = \ \frac{1}{3}
\left( \left(\frac{3+B}{2}\right)\  + \ \sqrt{27+B^2} \ 
\cos\left( \frac{k \pi}{3} +  \frac{1}{3} \arctan \frac{3\sqrt{3}}{B} \right) \right)  \label{arctan}
\end{equation}
then for $B\geq 0$ the roots of $p_B(x)$ are $\{s_2, s_4, s_6\}$ and
for $B\leq 0$ the roots of $p_B(x)$ are $\{s_1, s_3, s_5\}$.


\end{enumerate} 

\end{theorem}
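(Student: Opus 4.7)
For Part 1, the cleanest route is to verify that $n(r)=1/(1-r)$ is a root of $p_B$ whenever $r$ is. Clearing denominators with $(1-x)^3$, a direct expansion shows
\[
(1-x)^3 \, p_B\!\left(\tfrac{1}{1-x}\right) \;=\; -p_B(x),
\]
so that $p_B(n(r))=0$ whenever $p_B(r)=0$. Since $n$ has order three — one checks $n(n(x))=1-1/x$ and $n(n(n(x)))=x$ — and carries the three-element root set into itself, it must permute the roots.

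For Part 2 I would write $a=\sqrt[3]{r_1}$, $b=\sqrt[3]{r_2}$, $c=\sqrt[3]{r_3}$ with branches chosen so that $abc=\sqrt[3]{r_1 r_2 r_3}=-1$, and set $t=a+b+c$ and $u=ab+bc+ca$. The identity
\[
(x+y+z)^3 = x^3+y^3+z^3 + 3(x+y+z)(xy+yz+zx) - 3xyz,
\]
applied to $(a,b,c)$ and (using $(ab)(bc)(ca)=(abc)^2=1$) to $(ab,bc,ca)$, combined with Vieta's relations for $p_B$, produces the coupled system
\begin{align*}
t^3 &= \tfrac{B+9}{2} + 3tu, \\
u^3 &= \tfrac{B-9}{2} - 3tu.
\end{align*}
Multiplying these two equations yields $(tu)^3 + 9(tu)^2 + 27(tu) = (B^2-81)/4$, which rearranges as $(tu+3)^3 = (B^2+27)/4$. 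Hence $tu=\sqrt[3]{(27+B^2)/4}-3$, and back-substituting into the first equation gives
\[
t^3 \;=\; \tfrac{3+B}{2} - 6 + 3\sqrt[3]{\tfrac{27+B^2}{4}},
\]
which is the required identity.

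For Part 3 the plan is the classical trigonometric solution for a depressed real cubic. Substituting $x = y + (3+B)/6$ eliminates the quadratic term and yields $y^3+py+q=0$ with $p=-(27+B^2)/12$ and $q=-B(27+B^2)/108$. The further substitution $y=2\sqrt{-p/3}\,\cos\theta$, together with $\cos 3\theta = 4\cos^3\theta - 3\cos\theta$, reduces the cubic to $\cos 3\theta = B/\sqrt{27+B^2}$. Since the corresponding sine equals $3\sqrt{3}/\sqrt{27+B^2}$, one may rewrite $3\theta = \arctan(3\sqrt{3}/B)$ when $B>0$ and $3\theta = \arctan(3\sqrt{3}/B)+\pi$ when $B<0$ — the $\pi$-shift being forced because the principal arctangent lives in $(-\pi/2,\pi/2)$. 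Running $\theta, \theta+2\pi/3, \theta+4\pi/3$ back through $x=y+(3+B)/6$ then reproduces precisely $\{s_2,s_4,s_6\}$ when $B\geq 0$ and $\{s_1,s_3,s_5\}$ when $B\leq 0$.

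The real obstacle is the branch bookkeeping in Part 2: the derived identity holds only for a consistent choice of cube roots (notably $abc=-1$ and $tu+3$ the real cube root of $(B^2+27)/4$), which is exactly what ``choose the appropriate values'' means in the statement. A fully rigorous treatment would either restrict first to a real interval of $B$ on which all the cube roots can be taken real and then extend by analytic continuation, or carry the branch choices explicitly through every step of the symmetric-function manipulation above.
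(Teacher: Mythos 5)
Your proof is correct, and it is considerably more self-contained than the paper's. For Part 1 you use exactly the paper's key identity $(1-x)^3\,p_B\!\left(\frac{1}{1-x}\right) = -p_B(x)$; the only things the paper adds that you omit are the observation that $p_B(1)=-1\neq 0$ (needed so that $n(r)$ is actually defined at every root $r$, and so that the vanishing of the right-hand side forces $p_B(n(r))=0$ rather than $r=1$) and an explicit treatment of the degenerate case $n(r)=r$, which forces $r$ to be a primitive sixth root of unity and $B=\pm i\sqrt{27}$ with a triple root; your appeal to injectivity of $n$ on a finite root set quietly covers this, but the ``three-element root set'' phrasing presumes distinct roots. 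For Parts 2 and 3 the paper gives no argument at all --- it cites Berndt--Bhargava for the cube-root identity (via the Shanks polynomial $x^3-ax^2-(a+3)x-1$) and cites \cite{BCMA} for the trigonometric root formula --- whereas you supply complete proofs. Your Part 2 is the symmetric-function argument (the coupled system $t^3=\frac{B+9}{2}+3tu$, $u^3=\frac{B-9}{2}-3tu$, leading to $(tu+3)^3=\frac{27+B^2}{4}$) that is essentially the Berndt--Bhargava computation transported to $p_B$, and your Part 3 is the standard Vi\`ete trigonometric solution of the depressed cubic with the quadrant analysis that explains the $\pi$-shift distinguishing $\{s_2,s_4,s_6\}$ from $\{s_1,s_3,s_5\}$; I verified the intermediate quantities $p=-\frac{27+B^2}{12}$, $q=-\frac{B(27+B^2)}{108}$, and $\cos 3\theta=\frac{B}{\sqrt{27+B^2}}$. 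Your closing caveat about branch bookkeeping in Part 2 is exactly the caveat the theorem statement itself makes, so nothing essential is missing; what your version buys is a proof readable without chasing three external references, at the cost of the small checks noted above.
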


\begin{remark}
Although equation (\ref{arctan}) is not actually
defined at $B=0$, we can interpret it at that value by
simply taking the limit of (\ref{arctan}) as $B$ approaches $0$. 
Surprisingly, whether we 
have $B$ approach $0$ from above or from below, we end up with the same answer: the three values of 
$\{s_2, s_4, s_6\}$ and 
the three values of 
$\{s_1, s_3, s_5\}$
coincide at 
$\{-1, 1/2, 2\}$, which are indeed the three roots of
$p_0(x) = x^3 - 3/2\, x^2 - 3/2\, x + 1$.
\end{remark}
\begin{proof}
For part 1, a quick calculation
gives us that 
$-p_B\left(\frac{1}{1-x}\right) \cdot (1-x)^3 = p_B(x)$.
Since  $1$ is never a root of $p_B(x)$, this shows that if
$r_1$ is a root of $p_B$ then so also is $\frac{1}{1-r_1}$.
This is enough to show that 
$n(x)$ permutes the roots so long as 
$\frac{1}{1-r_1}$ is different from $r_1$; 
in the case where 
$\frac{1}{1-r_1}$ equals $r_1$,
this implies that $r_1$ is a primitive 
sixth root of unity, which means 
$B = \pm i \sqrt{27}$ and all three roots 
of $p_B(x)$ are identical (and hence, technically, are
still ``permuted" by $\frac{1}{1-x}$). 

For part 2, there is an elementary proof in \cite[p.~652]{BB} 
of  a nearly identical statement for the roots of the 
Shanks polynomial $x^3 - ax^2 - (a+3)x - 1$; the
roots of this Shanks polynomial are the negatives of
the roots of the Ramanujan polynomial 
$x^3 + a x^2 - (a+3)x + 1$ with $B = -2a-3$ and so the identity follows. This proof
also appears in  \cite[p.~22]{Ber}.

For part 3, we refer the reader to the similar proof 
for Shanks polynomials in \cite[Theorem 7]{BCMA}; another version of this formula (without proof, and for
just one root) can be found 
in \cite{Lou}.
\end{proof}

\begin{example}
We can now easily show that equations (\ref{Ram1}) and (\ref{Ram2}) arise from equation (\ref{theorem1cuberoots}) of Theorem \ref{theorem1}.
For equation (\ref{Ram1}), we take $p_B(x)$ with $B=0$ 
which has roots $1/2, -1, 2$, and so 
equation (\ref{theorem1cuberoots})
gives us
\[
\textstyle \sqrt[3]{1/2} \ + \ \sqrt[3]{-1} \ +\  \sqrt[3]{2} \ = \  \sqrt[3]{\left(\frac{3}{2}\right)
\ - \ 6 \ + \  3\sqrt[3]{\frac{27}{4}}}
\]
and after multiplying through by $\sqrt[3]{2/9}$ and doing 
some simplifying on the right, we get the desired equation.

As for (\ref{Ram2}), we note that 
the minimal polynomial for 
$2 \cos 2\pi/9$ is $x^3 - 3x + 1$, a Ramanujan simple cubic
with $B=-3$. It's easy to show that the other two
roots are $2 \cos 4\pi/9$ and $-2 \cos \pi/9$, and so 
equation (\ref{theorem1cuberoots})
gives us
\[
\textstyle \sqrt[3]{2 \cos 2\pi/9}
\ + \ \sqrt[3]{2 \cos 4\pi/9} \ +\  \sqrt[3]{-2 \cos \pi/9} \ = \  \sqrt[3]{\left(\frac{3-3}{2}\right)
\ - \ 6 \ + \  3\sqrt[3]{\frac{27+9}{4}}}
\]
and after simplifying the right and dividing by
$\sqrt[3]{2}$ we obtain the desired formula.
\end{example}

In the previous example, we began with a particular Ramanujan simple cubic and then derived statements about its roots. We can reverse the process, as seen next. 

\begin{example}
Suppose we wish to create a Ramanujan simple cubic with $x_1 = \sqrt{3}-1$ as one of its roots. We know that the other two roots must satisfy $x_2 = n(x_1)$ and 
$x_3 = n(x_2)$, where $n(x) = \frac{1}{1-x}$. This leads to 
$x_2 = 2+\sqrt{3}$ and $x_3 = (1 - \sqrt{3})/2$, and 
the polynomial $(x-x_1)(x-x_2)(x-x_3)$ is easily calculated to be a Ramanujan simple cubic
with $B = 3\sqrt{3}$. This leads to  
a particularly nice formulation of equation (\ref{theorem1cuberoots}); 
after some simplification (and after multiplying through by $\sqrt[3]{2}$ on
both sides) we obtain the following unexpected equation:
\[
\sqrt[3]{2\sqrt{3}-2} \ - \  \sqrt[3]{\sqrt{3} - 1 }\ + \ \sqrt[3]{2\sqrt{3}+4} 
\ = \ 
\sqrt{3}\cdot\sqrt[3]{1 + \sqrt{3}\left(\sqrt[3]{4}-1\right)}.
\]
\end{example}

\section{Main Result.}\label{SectionMainResult}
For $f(x) = x^3 + P x^2 + Q x + R$ a polynomial with 
(possibly) complex coefficients, we
note that its discriminant is 
\begin{align*}
\Delta &= P^2 Q^2 - 4 Q^3 - 4 P^3 R + 18 P Q R - 27 R^2,
\intertext{and we recall that a polynomial has no repeated roots if and only if its 
discriminant $\Delta$ is not zero. With this in mind, we  define the following 
two values (taken from their original definitions in \cite[p.~468]{Ser2}):}
a &= \frac{  \sqrt{\Delta} -  (9R-PQ) }{2\sqrt{\Delta}}, \\[1.2ex]
c &= \frac{6Q - 2P^2}{2\sqrt{\Delta}}.
\end{align*} 
We now state our main result. 

\begin{theorem}\label{theorem2}
Let $f(x) = x^3 + P x^2 + Q x + R$ have
non-repeated roots $t_1, t_2, t_3$,
and let 
$a$ and $c$ be as defined above.
\begin{enumerate}
\item If $c=0$, then there exists $h$ and $k$ such that $f(x) = (x-h)^3 + k$. 
In other words, $f(x)$ is a translation of $x^3$ (by $h$ units horizontally and
$k$ units vertically). 
\item If $c \not=0$, then $f\left(\frac{a-x}{c}\right) \cdot (-c)^3$ equals the 
Ramanujan simple cubic 
$p_B(x) = x^3 -\left(\frac{3+B}{2}\right) x^2 - \left(\frac{3-B}{2}\right) x + 1$, 
with $B =  6a + 2cP -  3$. In particular,
the set of roots of $p_B(x)$ are $\{a - c\cdot t_1,\ 
a - c \cdot t_2, \ a - c \cdot t_3\}$.
\end{enumerate}
\end{theorem}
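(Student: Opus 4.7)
My plan is a direct computation via Vieta's formulas that reduces the theorem to two polynomial identities in $P,Q,R$, both of which follow from the classical discriminant formula for the depressed cubic.

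For Part 1, the condition $c=0$ is equivalent to $Q = P^2/3$, and substituting this into $f$ and regrouping gives $f(x) = (x+P/3)^3 + (R - P^3/27)$, which is the desired translation with $h = -P/3$ and $k = R - P^3/27$.

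For Part 2, I would set $g(x) := (-c)^3\, f\!\left(\tfrac{a-x}{c}\right)$. A leading-term check shows $g$ is a monic cubic, and the substitution $y = (a-x)/c$ identifies its roots as $u_i := a - c t_i$, so the claim about the root set of $p_B$ follows automatically once $g = p_B$ is established. Expanding $g$ via Vieta on $f$ (so $\sum t_i = -P$, $\sum_{i<j} t_i t_j = Q$, $\prod t_i = -R$) yields
\[
g(x) = x^3 - (3a + cP)x^2 + (3a^2 + 2acP + c^2 Q)x - (a^3 + Pa^2c + Qac^2 + Rc^3),
\]
and matching coefficients with $p_B(x)$ produces three conditions. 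The $x^2$-coefficient condition is exactly the defining relation $B = 6a + 2cP - 3$. Writing $s = \sqrt{\Delta}$, $\beta := PQ-9R$, $\gamma := 3Q-P^2$ (so that $2sa = s+\beta$ and $sc = \gamma$), multiplying the $x$-coefficient condition by $4\Delta$ and the constant-term condition by $8s^3$ and then collecting powers of $s$ using $s^2 = \Delta$ reduces the two remaining conditions to the polynomial identities
\[
9\Delta + 3\beta^2 + 4P\gamma\beta + 4Q\gamma^2 = 0, \qquad (3\beta + 2P\gamma)\Delta + \beta^3 + 2P\gamma\beta^2 + 4Q\gamma^2\beta + 8R\gamma^3 = 0.
\]

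To finish, I would prove both identities. The first is nothing but the classical formula $\Delta = -4p^3 - 27q^2$ applied to the depressed cubic $y^3 + py + q$ obtained from $f$ via $y = x+P/3$: here $p = \gamma/3$ and $q = -(3\beta + 2P\gamma)/27$, and using $P^2 + \gamma = 3Q$ the expansion collapses to exactly the first identity. The second identity then follows from the first: solve for $\Delta$, substitute into $(3\beta + 2P\gamma)\Delta$, and simplify using $\gamma = 3Q-P^2$ and $\beta = PQ-9R$ — the leftover terms cancel after a short computation. I expect the main obstacle to be the bookkeeping in this last step, since the second identity mixes $\beta$, $\gamma$, $P$, $Q$, $R$ in several degrees at once; but recognizing the first identity as the depressed-cubic discriminant formula in disguise keeps the underlying structure transparent and drives the cancellation of the second.
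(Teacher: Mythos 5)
Your proof is correct, but it takes a genuinely different route from the paper's. The paper motivates the substitution $x \mapsto \frac{a-x}{c}$ conceptually: following Serret, the M\"obius map $m(x)=\frac{ax+b}{cx+d}$ has order three and permutes the roots of $f$, and $q(x)=\frac{a-x}{c}$ is chosen precisely so that $q^{-1}\circ m\circ q$ equals the canonical map $n(x)=\frac{1}{1-x}$ permuting the roots of every Ramanujan simple cubic; the final identification of $(-c)^3 f(q(x))$ with $p_B(x)$ is then deferred to an unstated ``brute force'' check. Your argument is, in effect, that brute-force check carried out in a structured and verifiable way: the coefficient match reduces to two polynomial identities in $P,Q,R$, and the first, $9\Delta+3\beta^2+4P\gamma\beta+4Q\gamma^2=0$, is exactly the depressed-cubic discriminant formula $\Delta=-4p^3-27q^2$ in disguise, while the second collapses to $\frac{8}{9}\gamma^3(\beta-PQ+9R)=0$ once $\Delta$ is eliminated using the first --- both identities check out. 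One small point to repair in the write-up: when you multiply the constant-term condition by $8s^3$ and sort by powers of $s$, the odd-power part does not vanish on its own; it collects into $s$ times your first identity, so the first identity must be established before the second condition reduces to the stated form. Also, your value $k=R-P^3/27$ in Part 1 is the correct one (the paper's $R-P^3/3$ appears to be a typo). In sum, the paper's route explains where $a$ and $c$ come from and why a linear shift to a Ramanujan simple cubic should exist at all, while yours supplies the complete computational verification the paper waves at, at the cost of that conceptual motivation.
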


\begin{proof}
First, suppose $c=0$. Then $Q = P^2/3$ and so 
$f(x)$ can be written as $(x - h)^3 + k$ with 
$h = -P/3$ and $k = R - P^3/3$. 

Next, for $c \not= 0$, it is possible to use brute force
to show that 
$f\left(\frac{a-x}{c}\right) \cdot (-c)^3$ equals 
$p_B(x)$, but that does not provide much insight into
the problem. Instead, we offer the following more detailed
explanation. The key can be found in Serret's classic
algebra textbook \cite[p.~468]{Ser2} 
from the mid nineteenth century. 
In pursuit of an entirely unrelated problem, 
Serret defined the $a$ and $c$ seen above, along
with the following: 
\begin{align*} 
b &= \frac{2Q^2-6PR}{2\sqrt{\Delta}} \\[1.2ex]
d &= 1-a
\end{align*}
Serret showed that $m(x) = \frac{a x + b}{c x + d}$ is of
order three under composition,  permutes the roots $t_1, t_2, t_3$ 
of the cubic $f(x) = x^3 +
Px^2 + Qx + R$, and has the property that $ad - bc = 1$.  
Now, we would like to transform the cubic $f(x)$ into a new cubic  whose
roots are permuted by $n(x) = \frac{1}{1-x}$, and one way to do that
is to first find a linear map $q(x)$ such that 
$(q^{-1} \circ m \circ q) (x) = n(x)$, and then to consider 
the composition $(f\circ q)(x)$. This composition 
would have as roots the numbers $q^{-1}(t_1), q^{-1}(t_2), q^{-1}(t_3)$, and 
furthermore these roots would be permuted by 
$(q^{-1} \circ m \circ q) (x) = \frac{1}{1-x}$. We can then show this 
composition must be a Ramanujan simple cubic. 

With this in mind, it remains to find our $q(x)$ such that 
$(q^{-1} \circ m \circ q) (x) = n(x)$.
This is a fairly easy task if one 
uses the language of  M\"obius transforms (see, for example, \cite{Dre1}).
Since $n(x)$ takes $\infty$ to $0$ to $1$ back to $\infty$, and 
$m(x)$ takes $\infty$ to $a/c$ to $-d/c$ back to $\infty$, we can choose $q(x)$ 
to take $\infty$ to $\infty$, and $0$ to $a/c$, and $1$ to $-d/c$.
This gives us $q(x) = \frac{a-x}{c}$ and $q^{-1}(x) = a - cx$. 
We can verify that indeed $(q^{-1} \circ m \circ q) (x) = n(x)$, and that
$f(q(x)) = f\left( \frac{a-x}{c}\right)$ has roots
$a-c\,t_1$, $a-c\,t_2$, and  $a - c\,t_3$ as desired. We can then revert to 
brute force to verify that $f\left( \frac{a-x}{c}\right)$ has the desired 
form of a Ramanujan simple cubic. 
\end{proof}

We can now combine Theorem \ref{theorem2} with Theorem \ref{theorem1}
to give us the following results. 
\begin{corollary}\label{cor1}
Let $f(x) = x^3 + P x^2 + Q x + R$ have
non-repeated 
roots $t_1, t_2, t_3$, 
and let $a$, $B$, and $c$ be as defined in Theorem \ref{theorem2}, with $c \not= 0$.
Then, 
\begin{enumerate} 
\item  The order-three map $n(x) = \frac{1}{1-x}$ permutes the set
$\{a - c\cdot t_1,\ a - c \cdot t_2, \ a - c \cdot t_3\}$.
\item  We have the Ramanujan-style equation
\begin{equation}\label{cor1cuberoots}
\textstyle 
\sqrt[3]{a - c \cdot t_1} \ + \ \sqrt[3]{a - c \cdot t_2} \ +\  \sqrt[3]{a - c \cdot t_3} \ = \  
\sqrt[3]{\left(\frac{3+B}{2}\right) \ - \ 6 \ + \ 
3\sqrt[3]{\frac{27 + B^2}{4}}},
\end{equation}
so long as, for complex arguments, we choose the appropriate values for the cube roots.
\item  If we define the elements of the set $\{u_1, u_2, \dots, u_6\}$ as 
\begin{equation}
\textstyle 
u_k \ = \ \frac{a}{c}  -  \frac{1}{3c}
\left( \left(\frac{3+B}{2}\right)\  + \ \sqrt{27+B^2} \ 
\cos\left( \frac{k \pi}{3} +  \frac{1}{3} \arctan \frac{3\sqrt{3}}{B} \right) \right)  \label{arctan2}
\end{equation}
then for $B\geq 0$ the roots of $f(x)$ are $\{u_2, u_4, u_6\}$ and
for $B\leq 0$ the roots of $f(x)$ are $\{u_1, u_3, u_5\}$.
\end{enumerate} 
\end{corollary}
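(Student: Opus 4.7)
The plan is to use Theorem \ref{theorem2} as a dictionary that translates statements about the roots of $p_B(x)$ into statements about the roots of $f(x)$. Under the assumption $c \neq 0$, part 2 of Theorem \ref{theorem2} identifies the set $\{a - c\,t_1,\ a - c\,t_2,\ a - c\,t_3\}$ as exactly the root set of the Ramanujan simple cubic $p_B(x)$ with $B = 6a + 2cP - 3$. Once this identification is in hand, each of the three conclusions of Corollary \ref{cor1} will follow by quoting the matching part of Theorem \ref{theorem1}.

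First I would dispatch part 1 by appealing to Theorem \ref{theorem1}(1), which asserts that $n(x) = 1/(1-x)$ permutes the roots of any RSC. Since those roots are, by the dictionary above, precisely $\{a - c\,t_1,\ a - c\,t_2,\ a - c\,t_3\}$, the claim is immediate.

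Next, for part 2, I would apply Theorem \ref{theorem1}(2) to $p_B$, which expresses the sum $\sqrt[3]{r_1} + \sqrt[3]{r_2} + \sqrt[3]{r_3}$ over the roots of $p_B$ as the right-hand side of (\ref{cor1cuberoots}). Substituting $r_i = a - c\,t_i$ into the left-hand side converts the identity into equation (\ref{cor1cuberoots}) verbatim, with the same caveat about appropriate choices of cube roots.

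Finally, for part 3, Theorem \ref{theorem1}(3) writes the roots of $p_B$ as $\{s_2, s_4, s_6\}$ when $B \geq 0$ and $\{s_1, s_3, s_5\}$ when $B \leq 0$, with $s_k$ given by (\ref{arctan}). Since each root $r$ of $p_B$ corresponds to the root $t = (a-r)/c$ of $f$, I would simply push $s_k$ through this linear map to obtain
\[
\frac{a - s_k}{c} \ = \ \frac{a}{c} - \frac{1}{3c}\left(\left(\frac{3+B}{2}\right) + \sqrt{27+B^2}\cos\left(\frac{k\pi}{3} + \frac{1}{3}\arctan\frac{3\sqrt{3}}{B}\right)\right),
\]
which is exactly the expression defining $u_k$ in (\ref{arctan2}), and the case split on the sign of $B$ transfers unchanged. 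There is no genuine obstacle here: the whole proof is a change of variable, and the only thing to keep an eye on is that the indexing parity (even $k$ versus odd $k$) is preserved under $r \mapsto (a-r)/c$, so that the sets $\{u_2,u_4,u_6\}$ and $\{u_1,u_3,u_5\}$ get assigned to the correct sign of $B$.
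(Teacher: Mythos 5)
Your proposal is correct and is precisely the argument the paper intends: the paper offers no separate proof of Corollary \ref{cor1}, merely noting that it follows by combining Theorem \ref{theorem2} (which identifies $\{a-c\,t_i\}$ as the roots of $p_B$) with the three parts of Theorem \ref{theorem1}, exactly as you do. Your extra care about pushing $s_k$ through the map $r \mapsto (a-r)/c$ and preserving the parity of the index is a welcome explicit check of a step the paper leaves silent.
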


We note that a similar version of formula (\ref{cor1cuberoots}) 
was presented (without proof) by forum user Tito Piezas III on 
{\tt math.stackexchange.com}.



\section{Examples.}\label{SectionExamples}

\begin{example} Here's a rather
lovely formula which we believe has not been seen before:
\[
\sqrt[3]{3-\sqrt{21} +  8 \cos\frac{2\pi}{21}} \ \  + \ \ 
\sqrt[3]{3-\sqrt{21} +  8  \cos\frac{8\pi}{21}} \ \ + \ \ 
\sqrt[3]{3-\sqrt{21} +  8 \cos\frac{10\pi}{21}} 
=  \sqrt[3]{-1-\sqrt{21} + 6\sqrt[3]{28 - 4 \sqrt{21}}}.
\]
To obtain this, we begin with 
$x^6-x^5-6 x^4+6 x^3+8 x^2-8 x+1$, the minimal polynomial
for $t_1 = 2 \cos 2\pi/21$. This factors in
$\mathbb{Q}(\sqrt{21})$ as two cubics,
and we choose the one which still has $2 \cos 2\pi/21$
as a root. This cubic is 
$x^3+\frac{1}{2} \left(-1-\sqrt{21}\right) x^2+\frac{1}{2} \left(\sqrt{21}-1\right) x+\frac{1}{2} \left(\sqrt{21}-5\right)$, and its other two
roots are 
$t_2 = 2 \cos 8\pi/21$ and $t_3 = 2 \cos 10\pi/21$,
and after doing the computations in Theorem \ref{theorem2}
we obtain $a = \frac{1}{2} \left(3-\sqrt{21}\right)$,
 $c= -2$, and $B = 8 - \sqrt{21}$. 
We then plug these values into formula (\ref{cor1cuberoots}),
multiply through by $\sqrt[3]{2}$,
and apply a few simplifications to obtain the above expression.
\end{example}

\begin{example}\label{ex4}
We can do similar calculations for 
$ 2\cos \pi/18$. This has a minimal polynomial of degree 6, but it factors
in $\mathbb{Q}(\sqrt{3})[x]$ and we choose the degree-three factor  
$g(x)=x^3-3x-\sqrt{3}$. One root of $g(x)$ is indeed $2 \cos \pi/18$, and
the
other two roots  are $2\cos 11\pi/{18}$ and $2\cos {13\pi}/{18}$. Calculating $a,c$ 
as defined in Section \ref{SectionMainResult}, we get $a=2$ and $c=-\sqrt{3}$. Thus, 
by Theorem \ref{theorem2}, we have $\displaystyle g(\frac{a-x}{c}) \cdot 
(-c)^3=x^3-6x^2+3x+1$ which is a particularly nice Ramanujan simple cubic with $B=9$.
(We will return to this cubic in Example \ref{ex4return}.) By Corollary 
\ref{cor1}, we get a nice identity:
\begin{equation}\label{e3}
\sqrt[3]{2 + 2 \sqrt{3} \cos\frac{\pi}{18}}  +
\sqrt[3]{2 + 2 \sqrt{3} \cos\frac{11\pi}{18}} + 
\sqrt[3]{2 + 2 \sqrt{3} \cos\frac{13\pi}{18}} =\sqrt[3]{9}.
\end{equation}
Furthermore, by Theorem \ref{theorem1}, we know the roots of $x^3-6x^2+3x+1$ 
 are permuted by $1/(1-x)$. Therefore, by choosing our roots carefully, we get 
\[
    2+2\sqrt{3}\cos \frac{13\pi}{18} =\frac{1}{1- \Big( 2+2\sqrt{3} \cos \displaystyle \frac{\pi}{18} \Big)}
\]
and this simplifies to 
\[
    2\cos \frac{\pi}{18}+\cos \frac{13\pi}{18}+\sqrt{3} \cos \frac{14\pi}{18} =0
\]
which reduces to 
\begin{equation}
   \cos \frac{5\pi}{18} \ =\   2\cos \frac{\pi}{18}  -\sqrt{3} \cos \frac{4\pi}{18}.
   \label{cos18}
\end{equation}
\end{example}

\begin{example}
In an effort to find more equations like (\ref{cos18}), 
we look at the minimal polynomials for 
$2 \cos \pi/36$ and $2 \cos \pi/42$. 
Both have minimal polynomials of degree 12, and both can be factored
down into degree three polynomials by adjoining appropriate square roots
to the rationals. 
By following the same steps as in the previous example we can arrive at
the following two identities:
\begin{align}
 2\sqrt{6}\cos \frac{11\pi}{36} \ +\ 6\cos \frac{10\pi}{36} \ - \ 
 \left(3\sqrt{2}+\sqrt{6}\right)\cos \frac{\pi}{36}  &= 0 \label{cc1} \\[1.2ex]
(\sqrt{3}-\sqrt{7})\cos \frac{\pi}{42} \ -\ 2\sqrt{7}\cos \frac{25\pi}{42} \ -\ 8\cos \frac{\pi}{42}\cos \frac{25\pi}{42} \ &=\ 3.  \label{cc2}
\end{align}
It's probably just a coincidence, but 
$\left(3\sqrt{2}+\sqrt{6}\right)\cos \pi/{36}$ from formula (\ref{cc1}) is almost identical (to six decimal places) to  $20/3$. Also, note that (\ref{cc1}) is equation (\ref{cos36}) from the beginning of the article. 
\end{example}

\begin{example}\label{ex4return}
 Returning our attention to Example \ref{ex4}, we note that 
Theorem \ref{theorem2} gave us the particularly nice
cubic $x^3-6x^2+3x+1$ and gave us that one of its roots is 
$2 + 2\sqrt{3} \cos \pi/18$. Likewise, if we begin with 
$2 \cos \pi/26$, we can factor its minimal (degree-12) polynomial down to
a degree-3 polynomial with irrational coefficients, apply Theorem \ref{theorem2},
and end up with another particularly nice polynomial, this time
$x^3 + x^2 - 4x + 1$, one of whose roots is given in equation (\ref{cos26}) at the
beginning of this paper. 

It turns out that, as seen in \cite{Leh}, these two polynomials are also just
an integer shift from the minimal polynomials for certain
{\em cubic Gaussian periods}. The exact nature of these objects is beyond the scope of this article; for our purposes, we can consider them to be sums of 
roots of unity with their inverses. Suffice it to say that
this recognition leads us to discover that 
$x^3-6x^2+3x+1$ is the minimal polynomial for the following 
three numbers:
\[
\left\{ 2 + 2 \cos \frac{\pi}{9} + 2 \cos \frac{2\pi}{9},\ \ \ 
 2 + 2 \cos \frac{4\pi}{9} + 2 \cos \frac{7\pi}{9}, \ \ \ 
2 +   2 \cos \frac{5\pi}{9} + 2 \cos \frac{8\pi}{9} 
\right\}.
\]  
Comparing these with $2 + 2\sqrt{3} \cos \pi/18$ leads us to the identity
\[
2 + 2\sqrt{3} \cos \frac{\pi}{18}  \ = \ 2 + 2 \cos \frac{\pi}{9} + 2 \cos \frac{2\pi}{9}.
\]
Unfortunately, this simplifies to a triviality. However, 
along these lines, we also discover that $x^3 + x^2 - 4x + 1$
is the minimal polynomial for the following 
three numbers:
\[
\left\{    2 \cos \frac{2\pi}{13} + 2 \cos \frac{10\pi}{13}       ,\ \ \ 
      2 \cos \frac{4\pi}{13} + 2 \cos \frac{6\pi}{13}          , \ \ \ 
        2 \cos \frac{8\pi}{13} + 2 \cos \frac{12\pi}{13} 
\right\}.
\]
After comparing to the solution in equation (\ref{cos26}), we obtain this (non-trivial)
identity,
\[
-5\ +\ \sqrt{13}\ +\ 2\sqrt{26-6\sqrt{13}} \, \cos \frac{\pi}{26}
\ =\ 4 \cos \frac{4\pi}{13} \ +\  4 \cos \frac{6\pi}{13},
\]
and this really is a lovely formula.

\end{example}

\begin{example}
We finish with an example that does not involve cosines. 
Consider the polynomial 
$f(x) = (x-1)(x-\sqrt{2})(x+\sqrt{3})$. This is not Ramanujan, but 
when we apply the methods of Theorem \ref{theorem2} we obtain
a Ramanujan polynomial $p_B(x)$ with 
$B= -6-\sqrt{2}- 5\sqrt{3}+\sqrt{6}$, and one of its roots
is $-1-\sqrt{2}-\sqrt{3}-\sqrt{6}$. After trying various values of 
$k$ with 
formula (\ref{arctan}), we find that 
\[
-1-\sqrt{2}-\sqrt{3}-\sqrt{6}
=
\ \frac{1}{3}
\left( \left(\frac{3+B}{2}\right)\  + \ \sqrt{27+B^2} \ 
\cos\left( -\pi +  \frac{1}{3} \arctan \frac{3\sqrt{3}}{B} \right) \right)  
\]
and after applying our value of $B$ and simplifying, we obtain the 
following formula:
\[
3+5 \sqrt{2}+\sqrt{3}+7 \sqrt{6}
=
2 \sqrt{2 \left(73-9 \sqrt{2}+28 \sqrt{3}-\sqrt{6}\right)} \cos \left(\frac{1}{3}
   \tan ^{-1}\left(\frac{3 \sqrt{3}}{-6-\sqrt{2}-5
   \sqrt{3}+\sqrt{6}}\right)\right),
\]
and this is surprising if for no other reason than the relatively small size
of the coefficients.
\end{example}

\

\bibliography{RSC}
\bibliographystyle{plain}

\ 

\ 


\end{document}